\documentclass[12pt]{article}
\usepackage[top=3cm, bottom=3cm, left=2cm, right=2cm]{geometry}
\usepackage[alphabetic,lite]{amsrefs}                                          
\usepackage{amssymb}                                                           
\usepackage{amsthm}                                                            
\usepackage{appendix}                                                          
\usepackage{array}                                                             
\usepackage{bm}                                                                
\usepackage{colonequals}                                                       
\usepackage{enumitem}                                                          
\usepackage{fullpage}                                                          
\usepackage{graphicx}                                                          
\usepackage{indentfirst}                                                       
\usepackage{mathrsfs}                                                          
\usepackage{mathtools}                                                         
\usepackage{moreenum}                                                          
\usepackage{multirow}
\usepackage{pdfpages}
\usepackage{stmaryrd}                                                          
\usepackage{titling}
\usepackage{verbatim}                                                          
\usepackage[all,cmtip]{xy}                                                     
\usepackage{tikz}
\usepackage{xcolor}                                                            

\definecolor{tianred}{rgb}{0.57, 0.36, 0.51}                                   
\definecolor{tianblue}{rgb}{0.0, 0.22, 0.66}                                   
\definecolor{tianpink}{rgb}{0.88, 0.56, 0.59}                                  
\definecolor{tiangreen}{rgb}{0.24, 0.82, 0.44}                                 
\definecolor{Prune}{RGB}{99,0,60}

\usepackage[colorlinks,
            linkcolor=tianred,
            anchorcolor=tiangreen,
            citecolor=tianblue,
            urlcolor=tianpink]{hyperref}                                      
\usepackage{cleveref}                                                          

\makeatletter
\DeclareRobustCommand\widecheck[1]{{\mathpalette\@widecheck{#1}}}
\def\@widecheck#1#2{%
    \setbox\z@\hbox{\m@th$#1#2$}%
    \setbox\tw@\hbox{\m@th$#1%
       \widehat{%
          \vrule\@width\z@\@height\ht\z@
          \vrule\@height\z@\@width\wd\z@}$}%
    \dp\tw@-\ht\z@
    \@tempdima\ht\z@ \advance\@tempdima2\ht\tw@ \divide\@tempdima\thr@@
    \setbox\tw@\hbox{%
       \raise\@tempdima\hbox{\scalebox{1}[-1]{\lower\@tempdima\box
\tw@}}}%
    {\ooalign{\box\tw@ \cr \box\z@}}}
\makeatother

\newcommand{\beac}{\begin{equation}\begin{array}{c}}                           
\newcommand{\eeac}{\end{array}\end{equation}}                                  

\newcommand{\beq}{\begin{equation}}
\newcommand{\eeq}{\end{equation}}

\newcommand{\brmks}{\begin{rmk}\hfill\benuma}
\newcommand{\ermks}{\eenum\end{rmk}}

\newcommand{\benum}{\begin{enumerate}[label={{\upshape(\alph*)}}]}             
\newcommand{\benuma}{\begin{enumerate}[label={{\upshape(\arabic*)}}]}          
\newcommand{\benumr}{\begin{enumerate}[label={{\upshape(\roman*)}}]}           
\newcommand{\eenum}{\end{enumerate}}
\newcommand{\bitem}{\begin{itemize}}                                           
\newcommand{\eitem}{\end{itemize}}                                             

\newcommand{\xm}{\xymatrix}                                                    

\newcommand{\tstsl}{\tst\sum\limits}                                           
\newcommand{\tstpl}{\tst\prod\limits}                                          

\theoremstyle{plain}      \newtheorem{thm}{Theorem}[section]                   %
\theoremstyle{plain}      \newtheorem{lem}[thm]{Lemma}                         %
\theoremstyle{plain}      \newtheorem{cor}[thm]{Corollary}                     %
\theoremstyle{plain}      \newtheorem{prop}[thm]{Proposition}                  %
\theoremstyle{plain}      \newtheorem{conjecture}[thm]{Conjecture}             %
\theoremstyle{definition} \newtheorem{rmk}[thm]{Remark}                        %
\theoremstyle{definition} \newtheorem{df}[thm]{Definition}                     %
\theoremstyle{definition}                       %
\theoremstyle{definition}                         %
\theoremstyle{definition}                        %
\theoremstyle{definition}                    %
\theoremstyle{definition}                        %
\theoremstyle{definition}                    %
\theoremstyle{definition}                  %
\theoremstyle{definition}          %
\theoremstyle{definition}              %
\theoremstyle{definition}                  %
\theoremstyle{definition} \newtheorem{prop-df}[thm]{Proposition-Definition}    %

\theoremstyle{plain}                   %
\theoremstyle{plain}                             %
\theoremstyle{plain}                         %
\theoremstyle{definition}                      %
\theoremstyle{definition}                   %
\theoremstyle{definition}                        %
\theoremstyle{definition}                      %
\theoremstyle{definition}                  %
\theoremstyle{definition}                       %

\theoremstyle{definition}
\newtheorem*{construction*}{Construction}                                      %
\newtheorem*{conjecture*}{Conjecture}                                          %
\newtheorem*{hypothesis*}{Hypothesis}                                          %
\newtheorem*{convention*}{Convention}                                          %
\newtheorem*{notation*}{Notation}                                              %
\newtheorem*{prop*}{Proposition}                                               %
\newtheorem*{summary*}{Summary}                                                %
\newtheorem*{qt*}{Question}                                                    %
\newtheorem*{rmk*}{Remark}                                                     %
\newtheorem*{fact*}{Fact}                                                      %
\newtheorem*{lizi*}{Example}                                                   %
\newtheorem*{df*}{Definition}                                                  %
\theoremstyle{plain}
\newtheorem*{thm*}{Theorem}                                                    %
\newtheorem*{lem*}{Lemma}                                                      %
\newtheorem*{axiom*}{Axiom}                                                    %

\newtheoremstyle{subsection-tweak}
{}{}{}{}{\bfseries}{}{.5em}{\thmnumber{(\@{#1}{}\@{#2}).}\thmnote{~{\bfseries#3.}}}
\theoremstyle{subsection-tweak}
\newtheorem{para}[thm]{}

\crefname{thm}{\textup{Theorem}}{Theorems}                                     %

\crefname{lem}{\textup{Lemma}}{Lemmas}
\crefname{eg}{\textup{Example}}{Examples}
\crefname{ex}{\textup{Exercise}}{Exercise}
\crefname{rmk}{\textup{Remark}}{Remarks}
\crefname{cor}{\textup{Corollary}}{Corollaries}
\crefname{df}{\textup{Definition}}{Definitions}
\crefname{question}{\textup{Question}}{Questions}
\crefname{prop}{\textup{Proposition}}{Propositions}
\crefname{conjecture}{\textup{Conjecture}}{Conjectures}
\crefname{convention}{\textup{Convention}}{Conventions}
\crefname{construction}{\textup{Construction}}{Constructions}
\crefname{hypo}{\textup{Hypothesis}}{Hypothesis}

\crefname{thme}{Th\'eo\`eme}{Th\'eo\`emes}
\crefname{lemme}{Lemme}{Lemmes}
\crefname{ege}{Exemple}{Exemples}
\crefname{core}{Corollaire}{Corollaires}
\crefname{rmke}{Remarque}{Remarques}
\crefname{dfe}{D\'efinition}{D\'efinitions}

\newcommand{\bref}[1]{\textup{(\ref{#1})}}

\newcommand{\al}{\alpha}                                                       
\newcommand{\sg}{\sigma}                                                       
\newcommand{\OG}{\Omega}                                                       
\newcommand{\LB}{\Lambda}                                                      

\usepackage[OT2,T1]{fontenc}
\DeclareSymbolFont{cyrletters}{OT2}{wncyr}{m}{n}
\DeclareMathSymbol{\RBe}{\mathalpha}{cyrletters}{"42}                          
\DeclareMathSymbol{\Che}{\mathalpha}{cyrletters}{"51}                          
\DeclareMathSymbol{\Sha}{\mathalpha}{cyrletters}{"58}                          

\newcommand{\f}{\mathfrak}

\newcommand{\mrm}{\mathrm}

\newcommand{\BA}{\mathbf{A}}

   \newcommand{\BX}{\mathbf{X}}

\newcommand{\BBA}{\mathbb{A}}

\newcommand{\BBG}{\mathbb{G}}

\newcommand{\ui}{^{-1}}                                                        

\newcommand{\ol}{\overline}                                                    
\newcommand{\wt}{\widetilde}                                                   

\newcommand{\QZ}{\mathbb{Q}/\mathbb{Z}}                                        

\newcommand{\olk}{\overline{k}}                                                

\DeclareMathOperator{\gal}{Gal}                                                

\DeclareMathOperator{\id}{id}                                                  

\DeclareMathOperator{\tz}{char}                                                

\DeclareMathOperator{\ops}{\oplus}                                             

\DeclareMathOperator{\e}{Spec}                                                 

\DeclareMathOperator{\Ker}{Ker}                                                
\renewcommand{\ker}{\Ker}                                                      
\DeclareMathOperator{\Image}{Im}                                               
\renewcommand{\Im}{\Image}                                                     

\DeclareMathOperator{\Pic}{Pic}                                                
\DeclareMathOperator{\Br}{Br}                                                  

\newcommand{\gm}{\BBG_m}                                                       

\DeclareMathOperator{\rad}{rad}                                                

\newcommand{\Gss}{G^{\mathrm{ss}}}                                             
\newcommand{\Gtor}{G^{\mathrm{tor}}}                                           

\newcommand{\et}{\mathrm{\acute{e}t}}                                          

\newcommand{\nr}{{\mathrm{nr}}}                                                

\newcommand{\het}{H_{\et}}                                                     

\newcommand{\Brnr}{\Br_{\nr}}                                                  

\DeclareMathOperator{\ab}{ab}                                                  

\newcommand{\stra}[1]{\stackrel{#1}{\to}}                                      
\newcommand{\mt}{\mapsto}                                                      

\newcommand{\qand}{\quad\ \textup{and}\quad\ }                                 
\newcommand{\zc}{\,|\,}                                                        

\newcommand{\es}{\varnothing}                                                  
\DeclareMathOperator{\pr}{pr}                                                  

\newcommand{\ce}{\colonequals}                                                 

\newcommand{\inpart}{In particular,~}                                          
\newcommand{\wrt}{with respect to~}                                            

\newcommand{\itm}{\item}                                                       
\newcommand{\tst}{\textstyle}                                                  

\begin{document}

\title{A Simpler Approach to a Descent Conjecture of Wittenberg}

\author{Yisheng TIAN}

\maketitle

\begin{abstract}
A descent conjecture of Wittenberg \cite{Wit24}*{Conjecture 3.7.4} predicts that if all the twists of a rationally connected torsor over a smooth base satisfy weak approximation with Brauer--Manin obstruction,
then so does the base.
We give an alternative proof of Wittenberg's conjecture for certain torsors under connected linear groups via Cao's descent formula.
\end{abstract}

\section{Introduction}
The inverse Galois problem, asking whether any finite group $G$ is a quotient of $\gal(\ol{k}|k)$ for some number field $k$,
is a fundamental open question in number theory.
It has a positive answer when $G$ is symmetric or alternating
(Hilbert $1892$) and $G$ is solvable (Shafarevich $1954$).
Other classical results for sporadic groups or non-abelian simple groups of Lie type are summarized in \cite{Wit24}*{Section 1.1}.
As of today, there are many approaches to the inverse Galois problem as mentioned in \emph{loc. cit.}.
In what follows, we shall proceed by a descent method developed by Colliot-Th\'el\`ene--Sansuc, Harpaz--Wittenberg and others.

Throughout, $k$ is a number field.
Let $\OG$ be the set of all places of $k$ and let $k_v$ be the completion of $k$ at any $v\in\OG$.
By a $k$-variety $X$, we always mean a separated $k$-scheme of finite type.
Manin \cite{Man71} introduced a pairing (see \cite{Sko01}*{\S5.2} for more information)
\[
\tstpl_{v\in\OG}X(k_v)\times \Brnr(X)\to \QZ,\ ((x_v),\al)\mt \tstsl_{v\in\OG}j_v\circ x_v^*(\al),
\]
where
$\Brnr(X)$ is the unramified Brauer group of $X$ (see \cref{def: Brauer groups arith inv and nr} below),
$x_v^*:\Brnr(X)\to \Br(k_v)$ is the map induced by $x_v\in X(k_v)$ and
$j_v:\Br(k_v)\to \QZ$ is the local invariant.
Let $X(k_{\OG})^{\Brnr}$ be its left kernel which is a closed subset of $X(k_{\OG})\ce \prod_{v\in\OG}X(k_v)$
\wrt the product of $v$-adic topologies.
A programmatic conjecture is the following

\begin{conjecture}
[Colliot-Th\'el\`ene, \cite{CT03}*{p.~174}]\label{conj: CT on rationally conn varieties}
Let $X$ be a rationally connected smooth variety over $k$.
Then $X(k)$ is dense in $X(k_{\OG})^{\Brnr}$.
\end{conjecture}

This conjecture is interesting in its own right as it predicts the geometry of $X$ controls its arithmetic behavior.
Moreover, a significant consequence of this conjecture is the \emph{inverse Galois problem}.
More precisely,
by embedding a finite group $G$ into the symmetric group $\f{S}_n$ for some $n\ge 1$,
we may let $G$ act on $\BBA^n_k$ via the $\f{S}_n$-action.
Subsequently, taking the open subset $Y\subset \BBA_k^n$ consisting of points with pairwise distinct coordinates,
we obtain a so-called \emph{versal} $G$-torsor $Y\to Y/G$.
In applying \cref{conj: CT on rationally conn varieties} to $X\ce Y/G$,
we conclude a positive answer to the inverse Galois problem for $G$
(see \cite{Wit24}*{Section 3.4} for a detailed discussion).
From this perspective, the investigation of the inverse Galois problem may be transferred into that of \cref{conj: CT on rationally conn varieties},
in which the descent method plays a key role.

\begin{para}[Twisting of torsors]
Let $G$ be a linear algebraic group over $k$ and
let $Y\to X$ be a left $G$-torsor.
For any $[\sg]\in H^1(k,G)$, let $P{_{\sg}}\to X$ be a right $G$-torsor representing $[\sg]$.
We write ${_{\sg}}Y\ce P{_{\sg}}\times^GY$ for the contracted product.
Then the induced morphism ${_{\sg}}f:{_{\sg}}Y\to X$ is a ${_{\sg}}G$-torsor.
Moreover, there is a bijection $H^1(k,{_{\sg}}G)\to H^1(k,G)$ of sets that
sends the neutral class of $H^1(k,{_{\sg}}G)$ to $[\sg]\in H^1(k,G)$.
\end{para}

After introducing the twisting technique, the descent method leads us to the following

\begin{conjecture}
[Wittenberg, \cite{Wit24}*{Conjecture 3.7.4}]\label{conj: Wittenberg}
Let $X$ be a smooth $k$-variety and let $G$ be a linear algebraic $k$-group.
Let $f:Y\to X$ be a $G$-torsor with $Y$ rationally connected.
Assume that ${_{\sg}}Y(k)$ is dense in ${_{\sg}}Y(k_{\OG})^{\Brnr({_{\sg}}Y)}$ for any $[\sg]\in H^1(k,G)$.
Then $X(k)$ is dense in $X(k_{\OG})^{\Brnr(X)}$.
\end{conjecture}

If $G$ is a torus, the conjecture is known by the works \cites{CTS87b, HW20}.
In general, Linh \cite{Lin26} proved it for any connected linear group $G$ and any
rationally connected $k$-variety $X$
using the descent method developed by Harpaz--Wittenberg \cite{HW20} and
the abelianization machinery of Borovoi \cite{Bor98}.
In the present article, we give an alternative proof under weaker assumption based on the invariant Brauer subgroup introduced by Cao \cite{Cao18AF}.

\begin{thm}\label{thm: intro main thm}
Let $G$ be a connected linear group over $k$.
Let $X$ be a smooth geometrically integral $k$-variety.
Let $f:Y\to X$ be a $G$-torsor and let $Y^c$ be a smooth compactification of $Y$.
Assume that $\pi_1(Y^c_{\ol{k}})^{\ab}=0$ and $\Br(Y^c)/\Im \Br(k)$ is finite.
Then we have
\[
X(k_{\OG})^{\Brnr(X)}=\ol{\tst\bigcup\limits_{[\sg]\in H^1(k,G)} {_{\sg}}f({_{\sg}}Y(k_{\OG})^{\Brnr({_{\sg}}Y)})},
\]
where $\ol{\LB}$ denotes the closure of $\LB\subset X(k_{\OG})$ \wrt the product topology.
\end{thm}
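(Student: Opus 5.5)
The inclusion ``$\supseteq$'' is formal, and it is where I would begin. Each ${_{\sg}}f$ is a morphism of smooth varieties, so it induces $f^*:\Brnr(X)\to\Br({_{\sg}}Y)$. I will check that the image lies in $\Brnr({_{\sg}}Y)$: purity applies because ${_{\sg}}f$ is smooth, and the residues of $f^*\al$ along divisors at infinity of ${_{\sg}}Y$ are controlled by those of $\al$. Functoriality of the Brauer--Manin pairing then gives ${_{\sg}}f({_{\sg}}Y(k_{\OG})^{\Brnr({_{\sg}}Y)})\subset X(k_{\OG})^{\Brnr(X)}$. The left-hand side is closed, so the closure of the union is contained in it as well.

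For ``$\subseteq$'' I will use Cao's invariant Brauer subgroup $\Br_G(Y)\subset\Br(Y)$ together with his descent formula \cite{Cao18AF}. For a connected linear $G$ and a $G$-torsor $f:Y\to X$, and for a subgroup $B\subset\Br(X)$, this formula reads
\[
X(k_{\OG})^{B}=\tst\bigcup_{[\sg]\in H^1(k,G)}{_{\sg}}f\bigl({_{\sg}}Y(k_{\OG})^{\,{_{\sg}}f^*B+\Br_{{_{\sg}}G}({_{\sg}}Y)}\bigr).
\]
I will take $B=\Brnr(X)$. The proof is then reduced to the following claim: the subgroup $A_\sg\ce {_{\sg}}f^*\Brnr(X)+\Br_{{_{\sg}}G}({_{\sg}}Y)$ contains $\Brnr({_{\sg}}Y)$ modulo constants, or at least the two subgroups cut out sets of adelic points whose images have the same closure. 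Once the claim holds, ${_{\sg}}Y(k_{\OG})^{A_\sg}\subset{_{\sg}}Y(k_{\OG})^{\Brnr({_{\sg}}Y)}$, and we obtain the desired inclusion even before taking closures.

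Proving this claim is where the hypotheses come in, and I expect it to be the main obstacle. The twisted torsor ${_{\sg}}Y$ has smooth compactifications that are $\ol k$-isomorphic to one of $Y$; hence $\pi_1^{\ab}=0$ and finiteness of $\Br/\Br_0$ also hold for ${_{\sg}}Y^c$. I would show $\Brnr({_{\sg}}Y)=\Br({_{\sg}}Y^c)\subset\Br_{{_{\sg}}G}({_{\sg}}Y)$ in three steps.
\begin{enumerate}[label={{\upshape(\arabic*)}}]
\item An element $\beta$ is $G$-invariant exactly when $m^*\beta-p_2^*\beta$ lies in $p_1^*\Br(G)$, or something similar, on $G\times Y$.
\item For $\beta$ unramified, $m^*\beta-p_2^*\beta$ gives a map $G\to\Br(Y^c)$, or, after passing to $\ol k$, a class in $\Br(G_{\ol k}\times Y^c_{\ol k})$. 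Because $\pi_1(Y^c_{\ol k})^{\ab}=0$, we have $H^1(Y^c_{\ol k},\QZ)=0$, so $\Pic$ is torsion free; the K\"unneth-type formula for $\Br$ of a product then leaves no mixed term beyond $\Br(G_{\ol k})+\Br(Y^c_{\ol k})$.
\item The finiteness of $\Br(Y^c)/\Br_0$ together with the connectedness of $G$ forces the map $G(k)\to\Br(Y^c)/\Br_0$, $g\mapsto g^*\beta-\beta$, to be constant, since a connected group acts trivially on a finite discrete set. Combining this with (2) yields invariance up to $\Br(G)$-terms.
\end{enumerate}
Should the geometric argument fail to give exact invariance, my fallback is to prove the weaker statement that $\Brnr({_{\sg}}Y)$ and $A_\sg$ have the same image in $\Br({_{\sg}}Y)/{_{\sg}}f^*\Br(X)$ up to finite index. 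I would then recover the equality only after closure, using the openness of the fibres of the evaluation maps, which is why the closure appears in the statement.
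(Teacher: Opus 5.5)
Your argument for ``$\supseteq$'' is fine. Your steps (1)--(2) are essentially the paper's proof that $\pi_1(Y^c_{\olk})^{\ab}=0$ forces $\Br_G=\Br$ (K\"unneth decomposition from $H^1(Y^c_{\olk},\mu_n)=0$, Hochschild--Serre, restriction to $\{e\}\times Y$), and step (3) is unnecessary.

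The genuine gap is in ``$\subseteq$'': the descent formula you attribute to Cao is not his Th\'eor\`eme 5.9, and it is false as written. Since $f\circ\rho=f\circ p_2$, one has ${_{\sg}}f^*\Br(X)\subset\Br_{{_{\sg}}G}({_{\sg}}Y)$, so your right-hand side does not depend on $B$ at all. Comparing $B=\Im\Br(k)$ with $B=\Brnr(X)$ would give $X(k_{\OG})=X(k_{\OG})^{\Brnr(X)}$ for every torsor, which is already absurd for $G$ trivial. Moreover, ramified classes in $\Br_{{_{\sg}}G}({_{\sg}}Y)$ do not pair with $k_{\OG}$-points, since their local invariants need not vanish at almost all places. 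Cao's theorem is a statement about $X(\BA)$, with subgroups $A\subset\Br(X)$ and $B_{\sg}\subset\Br_{{_{\sg}}G}({_{\sg}}Y)$ tied together by condition $(\dagger)$. If $B_{\sg}$ is the whole invariant subgroup, $(\dagger)$ forces $A=\Br(X)$. That group is in general infinite modulo constants, so nothing brings you down to $\Brnr(X)$ and $k_{\OG}$.

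Your intermediate conclusion, that the inclusion holds before taking closures, is in fact false. Take $K/k$ quadratic, $G=R^1_{K/k}\gm$, and the norm torsor $Y=R_{K/k}\gm\to X=\gm$. The hypotheses hold ($Y$ is rational) and $\Brnr(X)=\Im\Br(k)$, so the left-hand side is $\prod_v k_v^*$. But for $\sg\in H^1(k,G)=k^*/N(K^*)$, the image of ${_{\sg}}Y(k_{\OG})$ consists of families with $x_v\in\sg N(K_v^*)$ for all $v$. A family that is a non-norm at infinitely many inert places lies in none of these images. The closure in the statement is exactly the defect between adelic points and $k_{\OG}$-points.

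What is missing is the paper's choice of subgroups and the passage through $\BA$:
\begin{enumerate}
\item Take $B_{\sg}=\Brnr({_{\sg}}Y)$ and $A=(f^*)^{-1}(\Brnr(Y))$.
\item Choose $Y^c$ to be $G$-equivariant (Brion), so that ${_{\sg}}Y^c$ is a ${_{\sg}}G$-equivariant compactification of ${_{\sg}}Y$. Your step (2) also tacitly needs $m^*\beta$ to extend over $G\times Y^c$. One then gets $B_{\sg}=\Br({_{\sg}}Y^c)\subset\Br_{{_{\sg}}G}({_{\sg}}Y)$.
\item Compatibility of $\Theta$ with $Y\subset Y^c$ gives $\Theta^{\sg}_Y(\ol{B_{\sg}})=\ol{B}$, hence $(\dagger)$ and ${_{\sg}}f^*A\subset B_{\sg}$. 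Cao's formula then yields $X(\BA)^A=\bigcup_{\sg}{_{\sg}}f({_{\sg}}Y(\BA)^{\Brnr({_{\sg}}Y)})$.
\item The finiteness of $\Br(Y^c)/\Im\Br(k)$ is used here, not for a rigidity argument. Combined with the finiteness of $\Pic(G)$, hence of $\ker f^*$ via Sansuc's exact sequence $\Pic(G)\to\Br(X)\to\Br(Y)$, it makes $A/\Im\Br(k)$ finite.
\item Since $\Brnr(X)\subset A$, Harari's formal lemma gives $\ol{X(\BA)^A}=X(k_{\OG})^{\Brnr(X)}$, and likewise $\ol{{_{\sg}}Y(\BA)^{\Brnr({_{\sg}}Y)}}={_{\sg}}Y(k_{\OG})^{\Brnr({_{\sg}}Y)}$.
\end{enumerate}
Your fallback (finite index, openness of evaluation fibres) gestures at the formal lemma. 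But it does not work on $\BA$, and it has no subgroup $A\supset\Brnr(X)$ that is finite modulo constants and linked to the $B_{\sg}$ by $(\dagger)$, so it does not produce the inclusion.
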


\inpart if $Y$ is rationally connected, then the assumption of \cref{thm: intro main thm} on $Y$ is fulfilled
(see the proof of \cref{cor: Wittenberg conj via birational argument}).
The main ingredients of the proof is the following descent formula of Cao \cite{Cao18AF}*{Th\'eor\`eme 5.9}
\[
X(\BA)^{A}=\tst\bigcup\limits_{[\sg]\in H^1(k,G)}{_{\sg}}f({_{\sg}}Y(\BA)^{B_{\sg}+{_{\sg}}f^*(A)}),
\]
where $\BA$ is the ring of ad\`eles of $k$,
$A\subset \Br(X)$ is a subgroup and $B_{\sg}\subset \Br({_{\sg}}Y)$ is a subgroup containing $\Im\Br(k)$.
Subsequently, take $B_{\sg}=\Brnr({_{\sg}}Y)$ and $A=(f^*)\ui(B_{e})$, where $[e]\in H^1(k,G)$ is the neutral class.
In applying Harari's formal lemma, we may identify $X(k_{\OG})^{\Brnr(X)}$ with $\ol{X(\BA)^A}$
which yields the desired formula.

\begin{para}
[Acknowledgement]
The author is grateful to Yang CAO for many insightful discussions and helpful comments.
This article is supported by the grant of National Natural Science Foundation of China (no. 12401014).
\end{para}

\section{The invariant Brauer subgroup}
Let $k$ be a number field.
Let $k_{\OG}\ce \prod_{v\in \OG} k_v$ and
let $\BA$ be the ring of ad\`eles of $k$.
Let $G$ be a connected linear group over $k$.
A $k$-variety is a separated $k$-scheme of finite type.

\begin{df}\label{def: Brauer groups arith inv and nr}
Let $X$ be a smooth geometrically integral $k$-variety.
\benuma
\itm
The Brauer group of $X$ is defined as $\Br(X)\ce \het^2(X,\gm)$.
The {arithmetic Brauer group of $X$} is
\[
\Br_a(X)\ce \frac{\ker\big(\Br(X)\to \Br(X\times_k\ol{k})\big)}{\Im\big(\Br(k)\to \Br(X)\big)}.
\]

\itm
Suppose $X(k)\ne\es$.
Take any $x\in X(k)$ and let $x^*:\Br(X)\to \Br(k)$ be the induced homomorphism.
We put
\[
\Br_x(X)\ce \ker\big(\Br(X)\stra{x^*}\Br(k)\big).
\]
\inpart if we denote by $e$ the neutral element of $G$,
then $\Br_e(G)$ is defined.

\itm
Let $\rho:G\times_kX\to X$ be a left $G$-action on $X$.
After \cites{Cao18AF, Cao20},
the {\emph{invariant Brauer subgroup} of $\Br(X)$} is defined to be
\[
\Br_G(X)\ce \big\{b\in \Br(X)\zc (\rho^*(b)-p_2^*(b))\in p_1^*\Br(G)\big\},
\]
where $p_1:G\times_kX\to G$ and $p_2:G\times_kX\to X$ are the canonical projections.

\itm
The Brauer group is a birational invariant for smooth proper $k$-varieties by \cite{CTS21}*{Corollary 5.2.6}.
So we are allowed to define
the unramified Brauer group $\Brnr(X)$ of $X$ to be the Brauer group of
any smooth proper $k$-variety birationally equivalent to $X$.
\eenum
\end{df}

\begin{prop}\label{prop: pi1 vanish implies inv Br is Br}
Let $X$ be a smooth geometrically integral $k$-variety endowed with a left $G$-action.
If $\pi_1(X_{\olk})^{\ab}=0$,
then $\Br_G(X)=\Br(X)$.
\end{prop}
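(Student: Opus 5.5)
We have to show that for every $b\in\Br(X)$ the class $\rho^*(b)-p_2^*(b)\in\Br(G\times_kX)$ lies in $p_1^*\Br(G)$. The plan is to use the known structure of the Brauer group of a product $G\times_k X$ in which one factor is a connected linear group. The key input is the formula of Sansuc/Colliot-Th\'el\`ene--Skorobogatov type for smooth geometrically integral $X$ and connected linear $G$ over a field of characteristic zero:
\[
\Br(G\times_kX)\cong p_1^*\Br_e(G)\oplus p_2^*\Br(X)\oplus H
\]
up to a contribution $H$ controlled by homomorphisms between $\Pic$-type/character data of $G_{\olk}$ and $H^1(X_{\olk},\QZ)$-type data. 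Concretely, the obstruction to the decomposition is governed by $\mathrm{Hom}(\pi_1(G_{\olk}),\ \pi_1(X_{\olk})^{\ab})$ and by $\Pic$-pairings, as in the proof of \cite{Cao18AF} or in \cite{CTS21}*{\S5.7} (Brauer group of a product). The hypothesis $\pi_1(X_{\olk})^{\ab}=0$ forces $H^1(X_{\olk},\mu_n)=0$ for all $n$, so $\Pic(X_{\olk})$ is torsion-free and these mixed terms vanish.

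Rather than rely on a global formula, I would argue via restriction to fibres over points of $X$ and a rigidity argument. Set $\beta\ce\rho^*(b)-p_2^*(b)$. First, along $e\times X$ we have $\rho|_{e\times X}=p_2|_{e\times X}=\id_X$, so $\beta|_{\{e\}\times X}=0$. Second, I would invoke the K\"unneth-type exact sequence
\[
0\to \Br(G)\oplus\Br(X)\to \Br(G\times X)\to \mathrm{Hom}\bigl(\text{(finite data of }G_{\olk}),\,H^1(X_{\olk},\QZ)\bigr)\oplus(\dots)
\]
obtained from the Hochschild--Serre and Leray spectral sequences for $p_2$. This sequence uses the facts that $\olk[G]^*=\olk^*\times\hat G$ and $\Pic(G_{\olk})$ is finite, and the error terms involve $H^1(X_{\olk},\hat{\pi}_1)$-type groups, which vanish when $\pi_1(X_{\olk})^{\ab}=0$. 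Since $\Pic(G_{\olk})$ is finite and $H^1(X_{\olk},\text{finite})=\mathrm{Hom}(\pi_1(X_{\olk})^{\ab},\text{finite})=0$, I expect the sequence to give $\Br(G\times X)=p_1^*\Br(G)+p_2^*\Br(X)$. Writing $\beta=p_1^*g+p_2^*c$ and restricting to $e\times X$ yields $c+(\text{const } e^*g)=0$ in $\Br(X)$. Hence $p_2^*c=-p_2^*(\text{const})\in p_1^*\Br(k)\subset p_1^*\Br(G)$, so $\beta\in p_1^*\Br(G)$.

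The main obstacle is establishing the decomposition $\Br(G\times X)=p_1^*\Br(G)+p_2^*\Br(X)$ under only $\pi_1(X_{\olk})^{\ab}=0$, with no properness of $X$. My first attempt will be to cite the product formula for Brauer groups in \cite{CTS21} (or Cao's lemma in \cite{Cao18AF} that $\Br_G(X)\supseteq$ classes killed by the $\mathrm{Hom}(\pi_1,\cdot)$ obstruction). Failing that, I will redo the spectral sequence argument geometrically. Over $\olk$, the projection $G\times X\to X$ has fibres $G$, with $\Br(G_{\olk})=0$ up to finite terms and $\Pic(G_{\olk})$ finite. The $E_2$ terms $H^1(X_{\olk},\hat G)$ and $H^1(X_{\olk},\Pic G)$ then vanish from torsion-freeness and $\pi_1^{\ab}=0$. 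After that, Galois descent handles the arithmetic part.
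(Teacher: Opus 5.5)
\textbf{Genuine gap.} Your overall reduction matches the paper's. Once $\Br(G\times_kX)=p_1^*\Br(G)+p_2^*\Br(X)$ is known, restricting $\beta=\rho^*b-p_2^*b$ along $i=(e,\id)\colon X\to G\times_kX$ forces $\beta\in p_1^*\Br(G)$, and your computation of that last step is correct. The gap is that this decomposition is the entire content of the proposition, and you never establish it. The ``K\"unneth-type exact sequence'' is never actually written down: its target contains ``$(\dots)$'', you only say you \emph{expect} it to give the decomposition, and the citations are offered tentatively.

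The fallback Leray argument for $p_2$ with $\gm$-coefficients also fails as sketched, for two reasons.
\begin{enumerate}
\item $\Br(G_{\olk})$ is not ``zero up to finite terms''. For a torus of rank $r$ one has $\Br(G_{\olk})\cong(\QZ)^{r(r-1)/2}$; for example $\Br(\BBG_{m,\olk}^2)\cong\QZ$, generated by the symbol algebras $(x,y)_n$. So the term $E_2^{0,2}=H^0(X_{\olk},R^2p_{2*}\gm)$ is large. It would have to be identified with $p_1^*\Br(G_{\olk})$ and the differentials out of it controlled, and you do neither.
\item ``Galois descent handles the arithmetic part'' hides the real difficulty. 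A decomposition of $\Br(G_{\olk}\times X_{\olk})$ says nothing about $\Br_1(G\times_kX)$, which is governed by $H^2(k,\hat G)$ and $H^1(k,\Pic(G_{\olk}\times X_{\olk}))$. Nor does it ensure that the geometric components of a class come from $\Br(G)$ and $\Br(X)$. Since $X(k)$ may be empty, you cannot normalise along a slice $G\times\{x\}$ either.
\end{enumerate}

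\textbf{Missing idea.} Work with finite coefficients, where passing from $\olk$ to $k$ is an honest comparison of Hochschild--Serre spectral sequences. This is the paper's route:
\begin{enumerate}
\item The hypothesis $\pi_1(X_{\olk})^{\ab}=0$ gives $H^1(X_{\olk},\mu_n)=0$. Cao's theorem on the cohomology of products \cite{Cao23} then yields $H^2(G_{\olk},\mu_n)\oplus H^2(X_{\olk},\mu_n)\cong H^2(G_{\olk}\times X_{\olk},\mu_n)$.
\item Comparing the Hochschild--Serre spectral sequences gives $H^2_e(G,\mu_n)\oplus H^2(X,\mu_n)\cong H^2(G\times_kX,\mu_n)$.
\item The Kummer sequence gives a surjection $H^2(-,\mu_n)\to\Br(-)[n]$. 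Together with the fact that $\Br(G\times_kX)$ is torsion, this shows $\Br_e(G)\oplus\Br(X)\to\Br(G\times_kX)$ is surjective.
\end{enumerate}
With that decomposition in hand, your restriction-to-$\{e\}\times X$ argument finishes the proof exactly as in the paper.
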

\begin{proof}
Since $\pi_1(X_{\olk})^{\ab}=0$,
we deduce $H^1(X_{\olk},\mu_n)=0$ by \cite{SGA1}*{Expos\'e XI, \S5, ($\ast$)}.
By \cite{Cao23}*{Th\'eor\`eme 2.1}, we obtain a canonical isomorphism of abelian groups
\[
(p_1^*,p_2^*):H^2(G_{\olk},\mu_n)\ops H^2(X_{\olk},\mu_n)\to H^2(G_{\olk}\times_{\olk} X_{\olk},\mu_n).
\]
The Hochschild--Serre spectral sequence $H^i(k,H^j(-_{\olk},\mu_n))\Rightarrow H^{i+j}(-,\mu_n)$
together with the $7$-term exact sequence in low degrees
yields an isomorphism of abelian groups
\[
(p_1^*,p_2^*):H^2_e(G,\mu_n)\ops H^2(X,\mu_n)\to H^2(G\times_{k} X,\mu_n).
\]
The Kummer exact sequence $0\to \mu_n\to \gm\to \gm\to 0$ then implies the surjectivity of the induced map
\[
(p_1^*,p_2^*):\Br_e(G)\ops \Br(X)\to \Br(G\times_k X).
\]
Observe that $i=(e,\id):\e k\times_k X\to G\times_k X$ induces a map
$i^*:\Br(G\times_kX)\to \Br(X)$ such that
$i^*\circ p_1^*(\Br_e(G))=0$ and $i^*\circ p_2^*=\id$.
So we have $\Im p_1^*\subset \ker i^*$.
Conversely, take any $\al\in \ker i^*\subset \Br(G\times_kX)$ and
suppose $\al=p_1^*g+p_2^*x$ for some $(g,x)\in \Br_e(G)\oplus \Br(X)$.
Then $x=i^*\circ p_2^*(x)=i^*\circ(p_1^*,p_2^*)(g,x)=i^*(\al)=0$ implies
$\al=p_1^*g$, i.e., $\ker i^*\subset \Im p_1^*$.

Now take any $b\in \Br(X)$.
Since $\rho\circ i=p_2\circ i=\id_X$,
we conclude $i^*(\rho^*b-p_2^*b)=0$ which implies
$\rho^*b-p_2^*b\in \ker i^*=\Im p_1^*$.
This shows $b\in \Br_G(X)$, i.e., $\Br(X)\subset \Br_G(X)$.
\end{proof}

\section{A descent formula of Cao}

\begin{para}\label{para: def of Theta}
Let $G$ be a connected linear group over $k$.
Let $Z$ be a smooth geometrically integral $k$-variety endowed with a left $G$-action.
For any $[\sg]\in H^1(k,G)$, let $P_{\sg}$ be a right $G$-torsor over $k$ representing $[\sg]$.
Consider the contracted product and the projection
\[
{_{\sg}}Z\ce P_{\sg}\times_k^GZ
\qand
\theta^{\sg}_Z:P_{\sg}\times_kZ\to {_{\sg}}Z.
\]
By \cite{Cao18AF}*{Lemme 3.12}, the projections
$p_1:{_{\sg}}Z\times_kZ\to {_{\sg}}Z$ and $p_2:{_{\sg}}Z\times_kZ\to Z$ induce a canonical \textbf{isomorphism} of abelian groups
\[
(p_1^*,p_2^*): \Br_a({_{\sg}}Z)\ops \frac{\Br_G(Z)}{\Im \Br(k)}\to
\frac{\Br_{{_{\sg}}G\times_kG}({_{\sg}}Z\times_kZ)}{\Im \Br(k)},
\]
which induces a further canonical homomorphism of abelian groups
\[
\xm{
\displaystyle\Theta^{\sg}_Z:\frac{\Br_{_{\sg}G}({_{\sg}}Z)}{\Im\Br(k)}
\ar[r]^-{(\theta^{\sg}_Z)^*} &
\displaystyle\frac{\Br_{{_{\sg}}G\times_kG}({_{\sg}}P\times_kZ)}{\Im\Br(k)}
\ar[r]^-{\pr} &
\displaystyle\frac{\Br_G(Z)}{\Im\Br(k)}.
}
\]
\end{para}

\begin{lem}
[\cite{Cao18AF}*{Lemme 5.8}]
Let $X$ be a smooth geometrically integral $k$-variety and
let $f:Y\to X$ be a left $G$-torsor.
The map $\Theta^{\sg}_Y$ is an isomorphism for any $[\sg]\in H^1(k,G)$.
\end{lem}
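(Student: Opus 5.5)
We have to show that $\Theta^{\sg}_Y$ is an isomorphism for a torsor $f:Y\to X$, where $Z=Y$ carries the free $G$-action. The strategy is to identify both source and target of $\Theta^{\sg}_Y$ with the same group through the quotient map to $X$, and then check that $\Theta^{\sg}_Y$ is compatible with these identifications. The key geometric input is this: since $Y\to X$ is a $G$-torsor, the contracted product $P_\sg\times^G_k Y$ is a ${_{\sg}}G\times_k G$-torsor over $X$, and both ${_{\sg}}Y$ and $Y$ are quotients of it. Concretely, $\theta^\sg_Y:P_\sg\times_k Y\to{_{\sg}}Y$ is a $G$-torsor, where $G$ acts diagonally. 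Likewise the composite $P_\sg\times_kY\to Y\to X$ factors through ${_{\sg}}Y\to X$.

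First I would establish a description of invariant Brauer groups of torsors. For a torsor $T\to X$ under a connected linear group $H$, we expect $\Br_H(T)/\Im\Br(k)$ to equal $\Im(\Br(X)\to\Br(T))/\Im\Br(k)$ plus the contributions from $\Br_a$, and more precisely to sit in an exact sequence with $\Pic(H)$ as in Sansuc's sequence. Rather than compute this, I would argue directly with the diagram
\[
\xm{
P_\sg\times_kY \ar[r]^-{\theta^\sg_Y}\ar[d]_{q} & {_{\sg}}Y\ar[d]^{{_{\sg}}f}\\
Y\ar[r]^{f} & X.
}
\]
Here $q$ is the second projection. One checks that $q$ is the trivial $P_\sg$-bundle, and that $\theta^\sg_Y$ is a $G$-torsor.

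Second, I would describe $\Theta^\sg_Y$ concretely. Take $b\in\Br_{{_{\sg}}G}({_{\sg}}Y)$. The pullback $(\theta^\sg_Y)^*b$ lies in $\Br_{{_{\sg}}G\times G}(P_\sg\times Y)$, and by the isomorphism of \S\ref{para: def of Theta} (with $Z=Y$) it decomposes uniquely as $p_1^*a+p_2^*c$ with $a\in\Br_a(P_\sg)$ and $c\in\Br_G(Y)$. Then $\Theta^\sg_Y(b)=c$.

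To construct an inverse, I would use that the roles are symmetric. We have $Y\cong {_{\sg^{-1}}}({_{\sg}}Y)$ via the inverse torsor $P_\sg^{-1}$, which is a right ${_{\sg}}G$-torsor, and $P_\sg^{-1}\times{_{\sg}}Y\to Y$ is the analogous quotient map. I would show that $\Theta^{\sg^{-1}}_{{_{\sg}}Y}\circ\Theta^\sg_Y=\id$. For this, the natural move is to pull everything back to $P_\sg\times P_\sg^{-1}\times Y$, which maps to both $Y$ and ${_{\sg}}Y$. The isomorphism of \S\ref{para: def of Theta}, applied twice, then identifies the $\Br_G(Y)$-component on both sides. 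The $\Br_a$-components coming from the torsors $P_\sg$ and $P_\sg^{-1}$ are killed in the projection $\pr$, and the uniqueness of the decomposition forces the composite to be the identity. The opposite composite is handled by the same argument with $\sg$ replaced by $\sg^{-1}$.

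The main obstacle is verifying that the two-step pullback really is compatible with the decompositions. The component in $\Br_a(P_\sg)$ must not contribute a stray term in $\Br_G(Y)$, and here I would use the contraction morphism $P_\sg\times P_\sg^{-1}\to G$. The freeness of the action from the torsor structure matters at this step, to ensure the map lands in invariant subgroups. If this symmetry argument becomes messy, my fallback is to prove injectivity and surjectivity separately. Injectivity: if $c=0$, then $(\theta^\sg_Y)^*b$ comes from $\Br_a(P_\sg)$, and I would descend along the torsor $\theta^\sg_Y$ using a $\bar k$-point argument. Surjectivity: given $c$, produce $b$ by descending $p_2^*c$ (corrected by an element of $\Br_a(P_\sg)$) along $\theta^\sg_Y$.
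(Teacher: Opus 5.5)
The paper does not prove this lemma. It quotes Cao's Lemme 5.8, and \bref{para: Theta composite twice} uses exactly that $\Theta^{\sg'}_{{_{\sg}}Y}$ inverts $\Theta^{\sg}_Y$. So your target is the right one. However, your proposal stops where the content lies, and the step you sketch is circular as stated.

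\textbf{Where the sketch fails.} Write $P=P_\sg$ and $H={_{\sg}}G$. Take $b\in\Br_H({_{\sg}}Y)$ with $(\theta^{\sg}_Y)^*b\equiv p_P^*a+p_Y^*c$ modulo $\Im\Br(k)$, and put $b'=\Theta^{\sg'}_{{_{\sg}}Y}(c)$.

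Carry out your comparison on $P^{-1}\times P\times Y$, using $\rho^*c=p_1^*\gamma+p_2^*c$ and the contraction $P^{-1}\times P\to G$, and restrict to the locus where the contraction is $e$. What comes out is
\[
(\theta^{\sg}_Y)^*(b'-b)\in p_P^*\Br_1(P)+\Im\Br(k),\qquad \Br_1(P)=\ker\big(\Br(P)\to\Br(P_{\olk})\big).
\]
That is, $\Theta^{\sg}_Y(b'-b)=0$. To get $b'=b$ from this you need precisely the injectivity of $\Theta^{\sg}_Y$ that you are trying to prove. Identities such as $\Theta^{\sg}\Theta^{\sg'}\Theta^{\sg}=\Theta^{\sg}$ do not give bijectivity.

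Your fallback does not close the gap either. A $\ol{k}$-point argument only puts $b$ in $\ker(\Br({_{\sg}}Y)\to\Br({_{\sg}}Y_{\olk}))$, not in $\Im\Br(k)$. You also give no mechanism for descending classes along the $G$-torsor $\theta^{\sg}_Y$; Sansuc's sequence controls only the kernel of the pullback, not its image.

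Two smaller points. The freeness that matters is that of $G$ on $P$, not on $Y$. And it is $P\times_kY$, not the contracted product $P\times^G_kY$, that is an $H\times G$-torsor over $X$.

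\textbf{The missing idea} is the $k$-isomorphism
\[
\iota\colon P\times_k Y\longrightarrow P\times_k {_{\sg}}Y,\qquad (p,y)\longmapsto \big(p,\theta^{\sg}_Y(p,y)\big).
\]
It is an isomorphism because $G$ acts freely on $P$; over $\olk$, with $P\simeq G$, it is $(g,y)\mapsto(g,gy)$. It intertwines the $H\times G$-actions and satisfies
\[
p_{{_{\sg}}Y}\circ\iota=\theta^{\sg}_Y,\qquad p_P\circ\iota=p_P .
\]
After identifying $P$ with the underlying variety of the inverse torsor $P^{-1}$, it also satisfies $\theta^{\sg'}_{{_{\sg}}Y}\circ\iota=p_Y$.

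Transport the decomposition of $(\theta^{\sg}_Y)^*b$ through $\iota$. This gives
\[
(\theta^{\sg'}_{{_{\sg}}Y})^*c\equiv p_{{_{\sg}}Y}^*b-p_{P^{-1}}^*a .
\]
Uniqueness in Cao's Lemme 3.12, applied to ${_{\sg}}Y$ with its $H$-action, then gives $\Theta^{\sg'}_{{_{\sg}}Y}(c)=b$. The symmetric computation gives $\Theta^{\sg}_Y\circ\Theta^{\sg'}_{{_{\sg}}Y}=\id$.

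Equivalently, $\Br_{H\times G}(P\times_kY)/\Im\Br(k)$ has two K\"unneth decompositions, $\Br_a(P)\oplus\Br_G(Y)/\Im\Br(k)$ and $\Br_a(P)\oplus\Br_H({_{\sg}}Y)/\Im\Br(k)$. They share the same summand $p_P^*\Br_a(P)$, and $\Theta^{\sg}_Y$ is the induced isomorphism between the two complements.

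In your triple-product language, $\iota^{-1}$ supplies the section of $\id\times\theta^{\sg}_Y$ over $P^{-1}\times{_{\sg}}Y$. This section lets the comparison land on ${_{\sg}}Y$ itself rather than on $P\times Y$.
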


Similarly, for any $[\tau]\in H^1(k,{_{\sg}}G)$, there is an isomorphism of abelian groups
\[
\Theta_{{_{\sg}}Y}^{\tau}:
\frac{\Br_{{_{\tau}}(_{\sg}G)}({_{\tau}}(_{\sg}Y))}{\Im\Br(k)}
\to
\frac{\Br_{_{\sg}G}({_{\sg}}Y)}{\Im\Br(k)}.
\]

\begin{para}\label{para: Theta composite twice}
For any subgroup $B_{\sg}\subset \Br_{{_{\sg}}G}({_{\sg}}Y)$ containing $\Im\Br(k)$,
let $\ol{B_{\sg}}\ce B_{\sg}\bmod{\Im\Br(k)}$ and
denote by $\wt{\Theta}_Y^{\sg}(B_{\sg})\subset \Br_G(Y)$ the preimage of $\Theta^{\sg}_Y(\ol{B_{\sg}})\subset \Br_G(Y)/\Im\Br(k)$.
If we denote by $[\sg']\in H^1(k,{_{\sg}}G)$ the class of the inverse torsor of $P_{\sg}$
(see \cite{Sko01}*{p.~20, Example 2}),
then $\wt{\Theta}_{{_{\sg}}Y}^{\sg'}\circ \wt{\Theta}_Y^{\sg}(B_{\sg})= B_{\sg}$ by \cite{Cao18AF}*{Lemme 5.8}.
\end{para}

\begin{thm}
[\cite{Cao18AF}*{Th\'eor\`eme 5.9}]\label{thm: descent formula from Cao18}
Let $X$ be a smooth geometrically integral $k$-variety and
let $f:Y\to X$ be a left $G$-torsor.
For any $[\sg]\in H^1(k,G)$,
let $B_{\sg}\subset \Br_{{_{\sg}}G}({_{\sg}}Y)$ be a subgroup containing $\Im\Br(k)$.
Let $A\subset \Br(X)$ be a subgroup such that for any $[\sg]\in H^1(k,G)$,
\beq\label{eq: complicated condition for descent formula}\tag{$\dagger$}
({_{\sg}}f^*)\ui\Big(\tstsl_{\tau\in\Sha^1({_{\sg}}G)}\wt{\Theta}_{{_{\sg}}Y}^{\tau}(B_{\sg+\tau})\Big)\subset A,
\eeq
where $B_{\sg+\tau}\subset \Br_{{_{\tau}}({_{\sg}}G)}({_{\tau}}({_{\sg}}Y))$.
Then the following descent formula holds
\[
X(\BA)^{A}=\tst\bigcup\limits_{[\sg]\in H^1(k,G)}{_{\sg}}f({_{\sg}}Y(\BA)^{B_{\sg}+{_{\sg}}f^*(A)}).
\]
\end{thm}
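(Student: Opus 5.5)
We prove the descent formula by establishing the two inclusions separately. Throughout we use the twisting formalism of \cref{para: def of Theta} and the partition
\[
X(\BA)=\tst\bigsqcup_{[\sg]\in H^1(k,G)}{_{\sg}}f({_{\sg}}Y(\BA))\quad\text{restricted to points where the local torsor classes come from a global class,}
\]
together with the fact that $\Br_G$-elements give well-defined pairings along fibres.

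\textbf{The inclusion $\supseteq$.} Let $y\in{_{\sg}}Y(\BA)^{B_{\sg}+{_{\sg}}f^*(A)}$. For $a\in A$, functoriality of the Brauer--Manin pairing gives
\[
\langle {_{\sg}}f(y),a\rangle=\langle y,{_{\sg}}f^*a\rangle=0 .
\]
Hence ${_{\sg}}f(y)\in X(\BA)^A$. This direction is formal.

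\textbf{The inclusion $\subseteq$.} This is the substantial direction. Let $x=(x_v)\in X(\BA)^A$ and proceed in four steps.

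\emph{Step 1: a global class.} The fibres $f^{-1}(x_v)$ define classes $[\xi_v]\in H^1(k_v,G)$, and these are almost everywhere trivial. Elements of $\Br_a(X)$ coming from $f$, which lie in $A$ by \eqref{eq: complicated condition for descent formula} applied to $B_\sg\supset\Im\Br(k)$, control the abelianized obstruction. Via Borovoi's abelianization, or equivalently the Poitou--Tate exact sequence for $H^1(k,G)\to\prod_v H^1(k_v,G)$ and the pairing with $\Br_a$ of the torsor, orthogonality to these elements forces $(\xi_v)$ to come from a global class $[\sg]\in H^1(k,G)$.

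\emph{Step 2: a lifted point.} With this $[\sg]$ we get local lifts $y_v\in{_{\sg}}Y(k_v)$ over $x_v$. Each lift is unique up to the action of ${_{\sg}}G(k_v)$, and integrality almost everywhere gives an adelic point $y$.

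\emph{Step 3: adjusting the lift.} The remaining task is to make $y$ orthogonal to $B_{\sg}+{_{\sg}}f^*(A)$. Orthogonality to ${_{\sg}}f^*(A)$ is automatic from $x\in X(\BA)^A$. For $b\in B_{\sg}\subset\Br_{{_{\sg}}G}({_{\sg}}Y)$, the invariance property shows that the map $g\mapsto\langle gy,b\rangle-\langle y,b\rangle$ is a pairing with an element of $\Br({_{\sg}}G)$. The resulting functional on $\prod_v{_{\sg}}G(k_v)$ factors through a character of ${_{\sg}}G(\BA)/\ol{{_{\sg}}G(k)}$, using Sansuc's exact sequence
\[
{_{\sg}}G(k)\to{_{\sg}}G(\BA)\to\Br_a({_{\sg}}G)^D\to\Sha^1({_{\sg}}G)\to 1 .
\]
So $y$ can be moved within its fibre to kill the pairing with $B_\sg$ exactly when the obstruction class lies in the image of ${_{\sg}}G(\BA)$, that is, has trivial image in $\Sha^1({_{\sg}}G)$.

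\emph{Step 4: the $\Sha^1$ ambiguity.} Here condition \eqref{eq: complicated condition for descent formula} enters. Changing $\sg$ to $\sg+\tau$ with $\tau\in\Sha^1({_{\sg}}G)$ does not change the local classes. The elements $\wt{\Theta}^{\tau}_{{_{\sg}}Y}(B_{\sg+\tau})$ that descend to $X$ lie in $A$, and via the isomorphisms $\Theta$ of \cite{Cao18AF}*{Lemme 5.8} and \cref{para: Theta composite twice} they transport the obstruction between twists. Orthogonality of $x$ to these descended elements then shows that for a suitable $\tau$ the obstruction in $\Sha^1$ vanishes on ${_{\sg+\tau}}Y$. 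Step 3 applied to this twist finishes the proof.

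The main obstacle is Step 4: matching the Sansuc/Poitou--Tate obstruction with the descended Brauer classes. This requires a careful compatibility between the $\Theta$ maps and the pairings on $G$-orbits. I would first establish the compatibility formula
\[
\langle gy,b\rangle-\langle y,b\rangle=\langle g,\rho^*b-p_2^*b\rangle ,
\]
and then check how it transforms under $\Theta^\tau$.
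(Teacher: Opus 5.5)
The paper gives no proof of this statement; it is quoted from Cao's Th\'eor\`eme 5.9. So your proposal has to stand on its own. The inclusion $\supseteq$ is indeed formal. Steps 1--2 are acceptable in outline: the only part of $A$ that Step 1 needs is $\ker(f^*)$, which lies in $A$ by $(\dagger)$ for $[\sg]=[e]$. But the inclusion $\subseteq$ is not established, and the trouble starts in Step 3.

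\emph{Step 3 is not set up correctly.} For $b\in B_\sg$ write $\rho^*b-p_2^*b=p_1^*\lambda(b)$ with $\lambda(b)\in\Br_e({_{\sg}}G)$. Moving $y$ by $g\in{_{\sg}}G(\BA)$ changes $\langle y,b\rangle$ by $\langle g,\lambda(b)\rangle$. Hence the pairing with any $b\in B_\sg\cap\ker\lambda$ is constant along the fibre, and you must show it vanishes. This needs three things, none of which appear:
\begin{itemize}
\item the exactness of $\Br(X)\to\Br_{{_{\sg}}G}({_{\sg}}Y)\stra{\lambda}\Br_e({_{\sg}}G)$, a non-trivial input from Cao's work, not a formality;
\item the $\tau=0$ summand of $(\dagger)$, namely $({_{\sg}}f^*)\ui(B_\sg)\subset A$;
\item the fact that $\lambda$ lands in the algebraic part, without which the Sansuc--Demarche sequence you quote does not apply.
\end{itemize}
Even granting these, the functional $b\mapsto\langle y,b\rangle$ only lives on $\lambda(B_\sg)$. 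Its extensions to $\Br_a({_{\sg}}G)$ form a coset of $(\Br_a({_{\sg}}G)/\lambda(B_\sg))^D$, so there is no single ``obstruction class in $\Sha^1({_{\sg}}G)$'' as you claim. Relatedly, the union in your opening display is not disjoint: all twists by $\tau\in\Sha^1({_{\sg}}G)$ have the same adelic image, which is exactly what Step 4 must exploit.

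\emph{The decisive gap is Step 4.} This is the actual content of the theorem, and you only announce it. The identity you propose to establish first is the easy, formal part. What must be supplied is the following argument.
\begin{enumerate}
\item Because the right $\tau$ depends on the chosen extension, you need one functional compatible with all $B_{\sg+\tau}$ at once. Orthogonality of $x$ to $({_{\sg}}f^*)\ui\big(\sum_\tau\wt{\Theta}^\tau_{{_{\sg}}Y}(B_{\sg+\tau})\big)$ is precisely what makes $b\mapsto\langle y,b\rangle$ well defined on $\lambda\big(\sum_\tau\wt{\Theta}^\tau_{{_{\sg}}Y}(B_{\sg+\tau})\big)$. That is why $(\dagger)$ involves the sum.
\item Extend this functional to $\psi\in\Br_a({_{\sg}}G)^D$, and choose $\tau$ according to the image of $\psi$ in $\Sha^1({_{\sg}}G)$.
\item For $p\in P_\tau(\BA)$ and $b'\in B_{\sg+\tau}$, prove $\langle\theta(p,y),b'\rangle=\langle p,\beta(b')\rangle+\langle y,\Theta^\tau(b')\rangle$. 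Here $\theta^*b'=p_1^*\beta(b')+p_2^*\Theta^\tau(b')$ is the decomposition defining $\Theta$ in \bref{para: def of Theta}.
\item Check that $\lambda$ for the twisted action on ${_{\tau}}({_{\sg}}Y)$ corresponds to $\lambda\circ\Theta^\tau$ under $\Br_a({_{\tau}}({_{\sg}}G))\cong\Br_a({_{\sg}}G)$.
\item The genuinely non-formal point: show that $b'\mapsto\langle p,\beta(b')\rangle$ is induced by a functional on $\Br_a({_{\sg}}G)$ whose image in $\Sha^1({_{\sg}}G)$ is, up to sign, the class of $P_\tau$. This is a computation of the Brauer--Manin pairing on the locally trivial torsor $P_\tau$ via the duality underlying $\Br_a({_{\sg}}G)^D\to\Sha^1({_{\sg}}G)$.
\end{enumerate}
Only then does the total obstruction on the twist have trivial image in $\Sha^1$. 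Sansuc's sequence for ${_{\tau}}({_{\sg}}G)$ then yields $h$ with $h\,\theta(p,y)\perp B_{\sg+\tau}$. As written, nothing in your argument shows that passing to another twist changes the obstruction at all, so the core of $\subseteq$ is missing.
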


In the sequel, we only need the following case
where the condition \bref{eq: complicated condition for descent formula} is simplified.

\begin{lem}\label{lemma: descent formula from Cao18}
Let $A\subset \Br(X)$ be a subgroup such that
for each $[\sg]\in H^1(k,G)$
\[
(f^*)\ui(B)\subset A
\qand
\Theta^{\sg}_Y(\ol{B_{\sg}})\subset \ol{B},
\]
where $B\subset \Br_G(Y)$ is the subgroup for the neutral class $[e]\in H^1(k,G)$.
Then \bref{eq: complicated condition for descent formula} holds.
\end{lem}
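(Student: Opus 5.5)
The plan is to check \bref{eq: complicated condition for descent formula} directly for a fixed $[\sg]\in H^1(k,G)$. Take $x\in\Br(X)$ with
\[
{_{\sg}}f^*(x)\in \tstsl_{\tau\in\Sha^1({_{\sg}}G)}\wt{\Theta}_{{_{\sg}}Y}^{\tau}(B_{\sg+\tau}).
\]
We must show $x\in A$. By the hypothesis $(f^*)\ui(B)\subset A$, it is enough to prove $f^*(x)\in B$. The strategy is to transport the whole relation from ${_{\sg}}Y$ back to $Y$ by applying $\Theta^{\sg}_Y$. This is legitimate because ${_{\sg}}f^*(x)$ lies in $\Br_{{_{\sg}}G}({_{\sg}}Y)$: it is pulled back from $X$, so the ${_{\sg}}G$-action acts trivially on it.

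\textbf{Step 1: $\Theta^{\sg}_Y$ maps ${_{\sg}}f^*(x)$ to $f^*(x)$.} We claim $\Theta^{\sg}_Y({_{\sg}}f^*(x))=f^*(x)\bmod \Im\Br(k)$. The composite
\[
{_{\sg}}f\circ\theta^{\sg}_Y:P_{\sg}\times_kY\to{_{\sg}}Y\to X
\]
equals $f\circ p_2$, where $p_2$ is the projection to $Y$. Hence $(\theta^{\sg}_Y)^*\,{_{\sg}}f^*(x)=p_2^*f^*(x)$. Under the decomposition of \cref{para: def of Theta}, this element has trivial $\Br_a(P_{\sg})$-component (here $P_\sg$ plays the role of ${_{\sg}}Z$). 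Its projection is therefore $f^*(x)$.

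\textbf{Step 2: the $\Theta$-maps compose along twists.} Using the identification ${_{\tau}}({_{\sg}}Y)\cong{_{\sg+\tau}}Y$, i.e. $P_{\tau}\times^{{_{\sg}}G}(P_{\sg}\times^GY)\cong (P_{\tau}\times^{{_{\sg}}G}P_{\sg})\times^GY$, we want
\[
\Theta^{\sg}_Y\circ\Theta^{\tau}_{{_{\sg}}Y}=\Theta^{\sg+\tau}_Y .
\]
This should follow from the functoriality of the maps $\theta$ under contracted products together with the canonicity of the isomorphism in \cite{Cao18AF}*{Lemme 3.12}. It is the same mechanism that gives $\wt{\Theta}^{\sg'}_{{_{\sg}}Y}\circ\wt{\Theta}^{\sg}_Y=\id$ in \cref{para: Theta composite twice}. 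I expect this step to be the main obstacle: one must check that the decomposition and the projection $\pr$ are compatible with the iterated contracted product. The first attempt will be to compare the pullbacks to $P_{\tau}\times P_{\sg}\times Y$ along the two factorizations, and to argue that both pick out the same $\Br_G(Y)$-component.

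\textbf{Step 3: conclude.} Apply $\Theta^{\sg}_Y$ to the displayed relation. The left side becomes $f^*(x)$ modulo $\Im\Br(k)$, by Step 1. By Step 2, each summand on the right becomes $\Theta^{\sg+\tau}_Y(\ol{B_{\sg+\tau}})$, and by hypothesis this is contained in $\ol{B}$. Therefore $f^*(x)\in B+\Im\Br(k)=B$, since $B$ contains $\Im\Br(k)$. It follows that $x\in(f^*)\ui(B)\subset A$, which proves \bref{eq: complicated condition for descent formula}.
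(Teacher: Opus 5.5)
Your proposal is correct and follows the same route as the paper. The paper likewise transports the relation back to $Y$ along $\Theta^{\sg}_Y$ (``applying $(\wt{\Theta}^{\sg}_Y)\ui$''), tacitly using your Step 1 and the transitivity $\Theta^{\sg}_Y\circ\Theta^{\tau}_{{_{\sg}}Y}=\Theta^{\sg+\tau}_Y$ of your Step 2 without proving it; your element-wise version is cleaner, since it uses $(f^*)\ui(B)\subset A$ only on preimages, whereas the paper passes through $B\subset f^*(A)$, which does not follow from that hypothesis in general.
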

\begin{proof}
For any $[\sg]\in H^1(k,G)$ and any $[\tau]\in H^1(k,{_{\sg}}G)$,
let $[\sg+\tau]\in H^1(k,G)$ denote the class corresponding to the neutral class of
$H^1(k,{_{\tau}}({_{\sg}}G))$.
By assumption, we conclude $\wt{\Theta}_{Y}^{\sg+\tau}(B_{\sg+\tau})\subset B\subset f^*(A)$.
Then applying $(\wt{\Theta}_Y^{\sg})\ui$ yields
$\wt{\Theta}_{_{\sg}Y}^{\tau}(B_{\sg+\tau})\subset {_{\sg}}f^*(A)$.
\end{proof}

\section{Wittenberg's descent conjecture}
\begin{para}\label{para: convention}
Throughout this section,
let $X$ be a smooth geometrically integral $k$-variety.
Let $G$ be a connected linear $k$-group and
let $f:Y\to X$ be a $G$-torsor.
\end{para}

The next lemma is probably well-known.
We still write it here for the lack of a reference.

\begin{lem}\label{lem: finiteness of Pic}
Keep the same notation as in Paragraph \textup{\bref{para: convention}}.
The Picard group $\Pic(G)$ and the kernel $\ker(\Br(X)\to \Br(Y))$ are finite.
\end{lem}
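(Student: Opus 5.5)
We split the statement into the finiteness of $\Pic(G)$ and the finiteness of $\ker(\Br(X)\to\Br(Y))$, and reduce the second to the first via the standard exact sequence for torsors under connected linear groups.

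\emph{Finiteness of $\Pic(G)$.} Pass to $\olk$. Let $G^u$ be the unipotent radical, so that $G_{\olk}/G^u_{\olk}$ is reductive; since $G^u_{\olk}$ is an iterated extension of $\BBA^1$'s, $G_{\olk}$ is (as a variety) isomorphic to $G^{\mathrm{red}}_{\olk}\times\BBA^m$. Hence $\Pic(G_{\olk})\cong\Pic(G^{\mathrm{red}}_{\olk})$. For a connected reductive group this group is the character group of the fundamental group of the derived group, namely $\widehat{\pi_1(G^{\mathrm{ss}})}$, which is finite. (This follows from Sansuc's exact sequence applied to the simply connected cover of the derived subgroup together with the torus quotient.)

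Next, the low-degree Hochschild--Serre sequence gives
\[
0\to \Pic(G)\to \Pic(G_{\olk})^{\gal(\olk|k)}\to \Br(k)\to\cdots.
\]
This uses $\olk[G]^\times=\olk^\times\oplus\widehat{G}(\olk)$ and Hilbert 90, and one may more simply invoke Sansuc's sequence
\[
\widehat{G}(k)\to \olk[G]^{\times}/\olk^\times\to \cdots\to \Pic(G)\to \Pic(G_{\olk})^{\gal(\olk|k)}.
\]
Concretely, the first terms show that the kernel of $\Pic(G)\to\Pic(G_{\olk})$ is $H^1(k,\olk[G]^\times)$ modulo $H^1(k,\olk^\times)=0$, that is, $H^1(k,\widehat{G})$. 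This group is finite because $\widehat{G}$ is a finitely generated Galois module and the action factors through a finite quotient. So $\Pic(G)$ is finite.

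\emph{Finiteness of $\ker(\Br(X)\to\Br(Y))$.} Use Sansuc's exact sequence for the $G$-torsor $f:Y\to X$ (\cite{Sko01} or Sansuc 1981, Proposition 6.10):
\[
\Pic(Y)\to \Pic(G)\to \Br(X)\stra{f^*}\Br(Y).
\]
This sequence is valid for $X$ smooth and geometrically integral and $G$ connected linear, without any hypothesis on rational points of $X$. It shows that $\ker f^*$ is a quotient of $\Pic(G)$, hence finite by the first part.

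The main obstacle is making sure the exact sequence $\Pic(G)\to\Br(X)\to\Br(Y)$ holds in this generality. The map $\Pic(G)\to\Br(X)$ sends a class to the cup product with $[Y]$ via the extension type, and one should check that no hypothesis such as $\olk[X]^\times=\olk^\times$ is required. If that hypothesis were needed, I would instead argue with the Leray spectral sequence for $f$ and $\gm$. There $R^1f_*\gm$ is locally the constant sheaf $\Pic(G_{\olk})$-type étale sheaf, finite, and $f_*\gm$ differs from $\gm$ by the lattice $\widehat G$. The kernel of $\Br(X)\to\Br(Y)$ is then controlled by $H^1(X,R^1f_*\gm)$ and $H^2(X,\widehat G)$-contributions, and finiteness would require more work. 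So I would rely primarily on Sansuc's sequence.
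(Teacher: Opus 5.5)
Your proof is correct. The finiteness of $\ker(\Br(X)\to\Br(Y))$ is handled exactly as in the paper: it is the image of $\Pic(G)$ in Sansuc's sequence (6.10.1), which the paper also invokes without further hypotheses. For the finiteness of $\Pic(G)$ you take a different route.

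The paper stays over $k$ and argues by d\'evissage using Sansuc's Corollaire 6.11 twice. First, $\Pic(G^{\mrm{red}})\to\Pic(G)$ is onto because $\rad^u(G)$ is an affine space. Second, $\Pic(\Gtor)\to\Pic(G^{\mrm{red}})\to\Pic(\Gss)$ is exact, with $\Pic(\Gtor)\simeq H^1(k,\BX^*(\Gtor))$ finite and $\Pic(\Gss)$ finite by Lemme 6.9.

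You instead use Hochschild--Serre together with Rosenlicht's splitting $\olk[G]^\times=\olk^\times\oplus\widehat G$ to get $0\to H^1(k,\widehat G)\to\Pic(G)\to\Pic(G_{\olk})$. You then bound the geometric part by the finite group $\Pic(G^{\mrm{red}}_{\olk})$.

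The two proofs rest on the same inputs. Since $\widehat G=\BX^*(\Gtor)$, your arithmetic kernel is literally the paper's $\Pic(\Gtor)$. Your finiteness of $\Pic(G_{\olk})$ comes, as in the paper, from the finiteness of Picard groups of semisimple groups. Your version gives the cleaner structural statement and avoids group-extension sequences over $k$; the paper's avoids the spectral sequence.

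There are two slips to fix.

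First, the sequence you display, $0\to\Pic(G)\to\Pic(G_{\olk})^{\gal(\olk|k)}\to\Br(k)$, is false whenever $\widehat G\neq 0$. For the norm-one torus $T$ of a non-trivial cyclic extension, $\Pic(T)\simeq H^1(k,\widehat T)\neq 0$, while $\Pic(T_{\olk})=0$. The correct low-degree sequence begins $0\to H^1(k,\olk[G]^\times)\to\Pic(G)$, which is what your ``concretely'' sentence actually uses.

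Second, finiteness of $H^1(k,\widehat G)$ needs $\widehat G$ to be torsion-free, which holds because $G$ is connected. Being finitely generated with Galois action through a finite quotient is not enough. Inflation--restriction identifies $H^1(k,\widehat G)$ with $H^1(\gal(L|k),\widehat G)$ only because $\mathrm{Hom}_{\mathrm{cont}}(\gal(\olk|L),\widehat G)=0$. For a finite module such as $\mathbb{Z}/2$, the group $H^1(k,\mathbb{Z}/2)$ is infinite.
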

\begin{proof}
Let $\rad^u(G)$ be the unipotent radical of $G$ and
let $G^{\mrm{red}}\ce G/\rad^u(G)$.
The exact sequence $1\to \rad^u(G)\to G\to G^{\mrm{red}}\to 1$
induces an exact sequence $\Pic(G^{\mrm{red}})\to \Pic(G)\to \Pic(\rad^u(G))$ by \cite{San81}*{Corollaire 6.11}.
Since the underlying variety of $\rad^u(G)$ is affine,
the map $\Pic(G^{\mrm{red}})\to \Pic(G)$ is surjective.
Thus it suffices to show that $\Pic(G^{\mrm{red}})$ is finite.

Let $\Gss$ be the derived subgroup of $G^{\mrm{red}}$ (which is semi-simple) and
let $\Gtor\ce G^{\mrm{red}}/\Gss$ (which is a torus).
There is an exact sequence $\Pic(\Gtor)\to \Pic(G^{\mrm{red}})\to \Pic(\Gss)$ by \emph{loc. cit.}.
By \cite{San81}*{Lemme 6.9},
we see that $\Pic(\Gtor)\simeq H^1(k,\BX^*(\Gtor))$
(where $\BX^*(\Gtor)$ is the module of characters of $\Gtor$) and that $\Pic(\Gss)$ is finite.
But $H^1(k,\BX^*(\Gtor))$ is a finitely generated torsion abelian group,
so it is finite.
Consequently, $\Pic(G^{\mrm{red}})$ is finite as well.

Finally, due to \cite{San81}*{(6.10.1)} there is an exact sequence $\Pic(G)\to \Br(X)\to \Br(Y)$ of abelian groups.
Thus the finiteness of $\Pic(G)$ yields that of $\ker(\Br(X)\to \Br(Y))$.
\end{proof}

\begin{thm}\label{thm: main theorem of Wittenberg conj}
Keep the same notation as in Paragraph \textup{\bref{para: convention}}.
Let $Y^c$ be a smooth compactification of $Y$.
If $\pi_1(Y^c_{\ol{k}})^{\ab}=0$ and $\Br(Y^c)/\Im \Br(k)$ is finite,
then we have
\beq\label{eq: descent formula II}
X(k_{\OG})^{\Brnr(X)}=\ol{\tst\bigcup\limits_{[\sg]\in H^1(k,G)} {_{\sg}}f({_{\sg}}Y(k_{\OG})^{\Brnr({_{\sg}}Y)})}.
\eeq
If ${_{\sg}}Y(k)$ is dense in ${_{\sg}}Y(k_{\OG})^{\Brnr({_{\sg}}Y)}$ for any $[\sg]$,
then $X(k)$ is dense in $X(k_{\OG})^{\Brnr(X)}$.
\end{thm}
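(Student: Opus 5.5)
We apply Cao's descent formula (\cref{thm: descent formula from Cao18}) in the simplified form of \cref{lemma: descent formula from Cao18}, and then pass to closures with Harari's formal lemma.

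\textbf{Choice of subgroups.} Put $B_{\sg}\ce\Brnr({_{\sg}}Y)\subset \Br({_{\sg}}Y)$; this contains $\Im\Br(k)$. Since ${_{\sg}}Y$ is a twist of $Y$, it is $\ol{k}$-isomorphic to $Y$, so a smooth compactification ${_{\sg}}Y^c$ satisfies $\pi_1({_{\sg}}Y^c_{\olk})^{\ab}=0$. By \cref{prop: pi1 vanish implies inv Br is Br}, after extending the ${_{\sg}}G$-action to an equivariant compactification, or arguing directly on $Y$ via purity, we get $\Brnr({_{\sg}}Y)\subset\Br_{{_{\sg}}G}({_{\sg}}Y)$. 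Put $A\ce (f^*)\ui(\Brnr(Y))$.

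\textbf{Hypothesis of \cref{lemma: descent formula from Cao18}.} The condition $(f^*)\ui(B)\subset A$ holds by definition. The main obstacle is to show $\Theta^{\sg}_Y(\ol{\Brnr({_{\sg}}Y)})\subset\ol{\Brnr(Y)}$, i.e.\ that $\Theta$ preserves unramified classes. I would argue via the construction: $\theta^{\sg}_Y:P_{\sg}\times Y\to{_{\sg}}Y$ is a torsor under a connected group, so it pulls unramified classes to classes unramified on $P_{\sg}\times Y$. The decomposition of Cao's Lemme 3.12 splits such a class into a part from $\Br_a(P_{\sg})$ and a part from $\Br_G(Y)$. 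Unramifiedness of the $Y$-component can then be tested by restricting to a fibre $\{p\}\times Y$ with $p\in P_{\sg}(K)$ over a finite extension, or more simply by residue computations at divisors of $Y^c$: residues are computed on $(P_{\sg}\times Y^c)$ and the $P_{\sg}$-part contributes no residues along $P_{\sg}\times D$. Once this holds for all $[\sg]$, the symmetry in \bref{para: Theta composite twice} even gives equality.

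\textbf{Descent formula and Harari's lemma.} With these choices, Cao's theorem gives
\[
X(\BA)^A=\tst\bigcup_{[\sg]}{_{\sg}}f\big({_{\sg}}Y(\BA)^{\Brnr({_{\sg}}Y)+{_{\sg}}f^*(A)}\big).
\]
Next we compare with $k_{\OG}$-points. By \cref{lem: finiteness of Pic}, $\ker(\Br(X)\to\Br(Y))$ is finite, and by hypothesis $\Brnr(Y)/\Im\Br(k)$ is finite; hence $A/\Im\Br(k)$ is finite. Also $\Brnr(X)\subset A$, because unramified classes pull back to unramified classes. Harari's formal lemma then gives $\ol{X(\BA)^A}=X(k_{\OG})^{\Brnr(X)}$, using that $A$ is finite modulo constants, contains $\Brnr(X)$, and is contained in $\Br(X)$. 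It also gives $\ol{{_{\sg}}Y(\BA)^{\Brnr+{_{\sg}}f^*A}}={_{\sg}}Y(k_{\OG})^{\Brnr({_{\sg}}Y)}$; here we need ${_{\sg}}f^*(A)\subset\Brnr({_{\sg}}Y)$ up to constants, which comes from \cref{lemma: descent formula from Cao18} applied with $\tau$ trivial.

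Taking closures and using continuity of ${_{\sg}}f$ yields \bref{eq: descent formula II}. The final density claim is then immediate: each ${_{\sg}}f({_{\sg}}Y(k))\subset X(k)$ is dense in the corresponding image, so $X(k)$ is dense in $X(k_{\OG})^{\Brnr(X)}$.
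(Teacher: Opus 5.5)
Your architecture matches the paper's. You use the same $B_{\sg}=\Brnr({_{\sg}}Y)$ and $A=(f^*)\ui(\Brnr(Y))$, Cao's formula via \cref{lemma: descent formula from Cao18}, finiteness of $A/\Im\Br(k)$ from \cref{lem: finiteness of Pic}, and Harari's formal lemma on both sides. The real difference is how you prove $\Theta^{\sg}_Y(\ol{B_{\sg}})\subset\ol{\Brnr(Y)}$.

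The paper takes a $G$-equivariant compactification $Y^c$ (Brion) and applies \cref{prop: pi1 vanish implies inv Br is Br} to ${_{\sg}}Y^c$, so that $\Theta^{\sg}_{Y^c}$ is defined on all of $\Br({_{\sg}}Y^c)/\Im\Br(k)$ and lands in $\Br_G(Y^c)/\Im\Br(k)$. It then concludes by naturality of $\Theta$ for $Y\subset Y^c$, which is a pure diagram chase.

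Your residue computation is a valid substitute:
\begin{itemize}
\item $p_1^*a$ extends over $P_{\sg}\times Y^c$;
\item $\theta^*b$ is unramified, being the pullback of an unramified class along a dominant morphism;
\item the residue of $p_2^*c$ along $P_{\sg}\times D$ is the image of $\partial_D(c)$ under $H^1(k(D),\QZ)\to H^1(k(P_{\sg}\times D),\QZ)$, which is injective because $P_{\sg}$ is geometrically integral;
\item purity on $Y^c$ finishes.
\end{itemize}
This buys some generality. It shows $\Theta^{\sg}_Y$ respects unramified classes for any torsor under a connected group, so $\pi_1(Y^c_{\olk})^{\ab}=0$ is only needed for $\Brnr({_{\sg}}Y)\subset\Br_{{_{\sg}}G}({_{\sg}}Y)$. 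For that membership you still need the equivariant compactification, as in the paper. \cref{prop: pi1 vanish implies inv Br is Br} cannot be applied to the open ${_{\sg}}Y$ itself, whose $\pi_1^{\ab}$ need not vanish, so your alternative ``arguing directly on $Y$ via purity'' should be dropped or spelled out.

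Also discard the variant with a fibre over a finite extension $K/k$. Restriction--corestriction only gives $[K:k]\,\partial_D(c)=0$, not $\partial_D(c)=0$. Working at the generic point of $P_{\sg}$, a regular extension of $k$, is exactly your residue argument.

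One step is justified incorrectly. The inclusion ${_{\sg}}f^*(A)\subset\Brnr({_{\sg}}Y)$ does not come from \cref{lemma: descent formula from Cao18} with $\tau$ trivial. Condition $(\dagger)$ for trivial $\tau$ only says $({_{\sg}}f^*)\ui(B_{\sg})\subset A$, which is the opposite inclusion.

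What works is to deduce $A=({_{\sg}}f^*)\ui(B_{\sg})$ from the equality $\Theta^{\sg}_Y(\ol{B_{\sg}})=\ol{B}$, as the paper does:
\begin{itemize}
\item Since ${_{\sg}}f\circ\theta^{\sg}_Y=f\circ p_2$, one has $\Theta^{\sg}_Y(\ol{{_{\sg}}f^*a})=\ol{f^*a}$ for every $a\in\Br(X)$.
\item For $a\in A$ this lies in $\ol{B}=\Theta^{\sg}_Y(\ol{B_{\sg}})$, the equality you get from \bref{para: Theta composite twice}.
\item Injectivity of $\Theta^{\sg}_Y$ then gives ${_{\sg}}f^*a\in B_{\sg}$.
\end{itemize}

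The same equality also shows that $\Brnr({_{\sg}}Y)/\Im\Br(k)\simeq\Brnr(Y)/\Im\Br(k)$ is finite. You need this to apply Harari's lemma on ${_{\sg}}Y$ but never justify it, and the finiteness hypothesis only concerns $Y^c$. With these two repairs your argument goes through.
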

\begin{proof}
Recall that $\pi_1(Y^c_{\olk})$ and $\Br(Y^c)$ are birational invariants for smooth proper $k$-varieties
(see \cite{SGA1}*{Expos\'e X, Corollaire 3.4} and \cite{CTS21}*{Corollary 5.2.6} respectively),
so the assumptions on $Y^c$ are independent of the choice of it.
According to \cite{Bri22}*{Theorem 2},
we may choose $Y^c$ such that the $G$-action on $Y$ extends to it $G$-equivariantly.
Subsequently, the twist ${_{\sg}}Y^c$ is a ${_{\sg}}G$-equivariant smooth compactification of ${_{\sg}}Y$
for each $[\sg]\in H^1(k,G)$.
Let $\Theta_{Y^c}^{\sg}$ be the canonical homomorphism defined in \bref{para: def of Theta}
which makes the diagram commutative 
\beac\label{diag: main theorem comm diag}
\xm{
  {\Br_{{_{\sg}}G}({_{\sg}}Y^c)}/{\Im \Br(k)}\ar[r]\ar[d]_-{\Theta_{Y^c}^{\sg}}
& {\Br_{{_{\sg}}G}({_{\sg}}Y)}/{\Im \Br(k)}\ar[d]^-{\Theta_{Y}^{\sg}} \\
  {\Br_G(Y^c)}/{\Im \Br(k)}\ar[r]
& {\Br_G(Y)}/{\Im \Br(k)}.
}
\eeac
For each $[\sg]\in H^1(k,G)$,
let $B_{\sg}\ce \Brnr({_{\sg}}Y)$ and
$B\ce \Brnr(Y)$.
Let $f^*:\Br(X)\to \Br(Y)$ be the induced map and
let $A\ce (f^*)\ui(B)\subset \Br(X)$.
By assumption, we obtain $\pi_1({_{\sg}}Y^c_{\ol{k}})^{\ab}=0$  and
hence $\Br({_{\sg}}Y^c)=\Br_{{_{\sg}}G}({_{\sg}}Y^c)$ by \cref{prop: pi1 vanish implies inv Br is Br}.
So we have
\[
B_{\sg}\ce \Brnr({_{\sg}}Y)\simeq \Br({_{\sg}}Y^c)=\Br_{_{\sg}G}({_{\sg}}Y^c)\subset \Br_{_{\sg}G}({_{\sg}}Y).
\]
It follows that $\Theta^{\sg}_Y(\ol{B_{\sg}})=\Theta^{\sg}_{Y^c}(\ol{B_{\sg}})\subset \Br(Y^c)/\Im\Br(k)=\ol{B}$ by
\bref{diag: main theorem comm diag}.
Thus
$\Theta^{\sg}_Y(\ol{B_{\sg}})=\ol{B}$ by \bref{para: Theta composite twice}
and $A=(f^*)\ui(B)=({_{\sg}}f^*)\ui(B_{\sg})$.
\inpart the conditions of \cref{lemma: descent formula from Cao18} are fulfilled.
Consequently, we deduce
\[
X(\BA)^A
=\tst\bigcup\limits_{[\sg]\in H^1(k,G)}{_{\sg}}f({_{\sg}}Y(\BA)^{\Brnr({_{\sg}}Y)+{_{\sg}}f^*(A)})
=\tst\bigcup\limits_{[\sg]\in H^1(k,G)}{_{\sg}}f({_{\sg}}Y(\BA)^{\Brnr({_{\sg}}Y)}),
\]
where the last equality follows from ${_{\sg}}f^*(A)=\Brnr({_{\sg}}Y)$.

Thanks to \cref{lem: finiteness of Pic}, the groups $\Pic(G)$ and $\ker f^*$ are finite.
Since $\Br(Y^c)/\Im \Br(k)$ is finite by assumption,
the quotient $(f^*)\ui(\Br(Y^c))/\Im \Br(k)=A/\Im \Br(k)$ is also finite.
Since $\Brnr(X)\subset A$ by construction,
we conclude $\ol{X(\BA)^A}=X(k_{\OG})^{\Brnr(X)}$ by Harari's formal lemma \cite{Har94}*{Corollaire 2.6.1}
(see also \cite{Lin26}*{Lemma 3.10(ii)} for a detailed argument).
Subsequently, we immediately deduce
\[
\ol{\tst\bigcup\limits_{[\sg]\in H^1(k,G)} {_{\sg}}f({_{\sg}}Y(\BA)^{\Brnr({_{\sg}}Y)})}
=
\ol{\tst\bigcup\limits_{[\sg]\in H^1(k,G)} {_{\sg}}f({_{\sg}}Y(k_{\OG})^{\Brnr({_{\sg}}Y)})}
=X(k_{\OG})^{\Brnr(X)},
\]
where the first equality follows from Harari's formal lemma
together with the finiteness of $\Br({_{\sg}}Y^c)/\Im \Br(k)$.

The continuity of ${_{\sg}}f$ implies the density of ${_{\sg}}f({_{\sg}}Y(k))$ in ${_{\sg}}f({_{\sg}}Y(k_{\OG})^{\Brnr({_{\sg}}Y)})$.
Hence $X(k)$ is dense in
$\bigcup {_{\sg}}f({_{\sg}}Y(k_{\OG})^{\Brnr({_{\sg}}Y)})$ and
we obtain $\ol{X(k)}=X(k_{\OG})^{\Brnr(X)}$ by \bref{eq: descent formula II}.
\end{proof}

As an immediate consequence, we conclude the promised conjecture of Wittenberg.
The argument is probably well-known,
but we still give a complete proof for the convenient of the readers.

\begin{cor}\label{cor: Wittenberg conj via birational argument}
let $X$ be a smooth geometrically integral $k$-variety.
Let $G$ be a connected linear $k$-group and
let $f:Y\to X$ be a $G$-torsor with $Y$ rationally connected.
Assume that ${_{\sg}}Y(k)$ is dense in ${_{\sg}}Y(k_{\OG})^{\Brnr({_{\sg}}Y)}$ for any $[\sg]\in H^1(k,G)$.
Then $X(k)$ is dense in $X(k_{\OG})^{\Brnr(X)}$.
\end{cor}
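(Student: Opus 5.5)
The plan is to reduce the corollary to the last assertion of \cref{thm: main theorem of Wittenberg conj}. Its density hypothesis on the twists ${_{\sg}}Y$ is exactly the hypothesis of the corollary. So it suffices to check the two geometric conditions on a smooth compactification $Y^c$ of $Y$: that $\pi_1(Y^c_{\olk})^{\ab}=0$ and that $\Br(Y^c)/\Im\Br(k)$ is finite. Both quantities are birational invariants of smooth proper varieties, as recalled in the proof of \cref{thm: main theorem of Wittenberg conj}, so any smooth compactification may be used. The existence of one is guaranteed by Nagata and Hironaka in characteristic zero, or equivariantly by \cite{Bri22}. Since rational connectedness is a birational property among smooth proper varieties, $Y^c$ is a smooth proper rationally connected $k$-variety.

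For the fundamental group, I will use the theorem of Campana and Koll\'ar--Miyaoka--Mori: a smooth proper rationally connected variety over an algebraically closed field of characteristic zero is simply connected. This gives $\pi_1(Y^c_{\olk})=1$, and in particular $\pi_1(Y^c_{\olk})^{\ab}=0$.

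For the finiteness of $\Br(Y^c)/\Im\Br(k)$, I will show first that $\Br(Y^c_{\olk})$ is finite and then that $\Br_1(Y^c)/\Im\Br(k)$ is finite.
\begin{itemize}
\item Rational connectedness gives $H^i(Y^c,\mathcal{O})=0$ for $i>0$. Hence $H^2$ is spanned by algebraic classes, so $b_2=\rho$ and $\Pic(Y^c_{\olk})$ is a finitely generated free group, equal to $H^2(Y^c_{\olk},\mathbb{Z}_\ell(1))$ modulo torsion.
\item The exact sequence $0\to \Pic\otimes\QZ\to H^2(Y^c_{\olk},\QZ(1))\to \Br(Y^c_{\olk})\to 0$ then shows that $\Br(Y^c_{\olk})$ is a quotient of the torsion of $H^3(Y^c_{\olk},\hat{\mathbb{Z}}(1))$. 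That torsion is finite, so $\Br(Y^c_{\olk})$ is finite.
\item The Hochschild--Serre spectral sequence gives an injection $\Br_1(Y^c)/\Im\Br(k)\hookrightarrow H^1(k,\Pic(Y^c_{\olk}))$. This uses $\olk[Y^c]^\times=\olk^\times$ and $H^3(k,\gm)=0$ for a number field.
\item Since $\Pic(Y^c_{\olk})$ is a finitely generated free module split by a finite extension, $H^1(k,\Pic(Y^c_{\olk}))$ is finite by inflation--restriction.
\end{itemize}
Combining $\Br_1(Y^c)/\Im\Br(k)$ finite with $\Br(Y^c)/\Br_1(Y^c)\subset\Br(Y^c_{\olk})$ finite gives the required finiteness.

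With both conditions verified, \cref{thm: main theorem of Wittenberg conj} applies and yields the density of $X(k)$ in $X(k_{\OG})^{\Brnr(X)}$. The only substantive inputs are the classical facts about rationally connected varieties, namely simple connectedness and vanishing of $H^i(\mathcal{O})$. The step requiring the most care is the geometric Brauer group finiteness, specifically the identification of $\Br(Y^c_{\olk})$ with a finite torsion group via $b_2=\rho$. I would cite it as standard, for instance from \cite{CTS21}.
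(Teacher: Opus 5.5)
Your proposal is correct and follows essentially the same route as the paper. You reduce to the final assertion of \cref{thm: main theorem of Wittenberg conj}, obtain $\pi_1(Y^c_{\olk})=0$ from the simple connectedness of smooth proper rationally connected varieties, and check that $\Br(Y^c)/\Im\Br(k)$ is finite.

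There are two minor differences. First, the paper reaches a smooth projective model via Chow's lemma and resolution, whereas you appeal to birational invariance. Second, the paper simply cites Colliot-Th\'el\`ene--Skorobogatov for the finiteness, while you reprove it by the standard argument: $b_2=\rho$ handles the geometric part, and the Hochschild--Serre injection into the finite group $H^1(k,\Pic(Y^c_{\olk}))$ handles the algebraic part. Note that the torsion-freeness of $\Pic(Y^c_{\olk})$ used in that last step comes from $\pi_1(Y^c_{\olk})=0$, not from $b_2=\rho$.
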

\begin{proof}
Since $X$ is geometrically integral and $\tz(k)=0$,
the $G$-torsor $Y$ is also geometrically integral.
Let $Y^c$ be a smooth compactification of $Y$ which is irreducible.
Hence Chow's lemma \cite{EGAII}*{Th\'eor\`eme 5.6.1 and Corollaire 5.6.2} yields
a birational surjective morphism $Y'\to Y^c$ with projective $Y'$.
Let $Y''\to Y'$ be a resolution of singularities with projective $Y''$.
Now $Y''$ is a smooth projective variety that is birationally equivalent to $Y^c$,
thus $Y''$ is rationally connected.
So we deduce $\pi_1(Y^c_{\olk})\simeq \pi_1(Y''_{\olk})=0$
where the  vanishing follows from \cite{Kol96}*{p.~200, Proposition (3.3.1)} and \cite{Kol01}*{Theorem 13}.
Since the group $\Br(Y^c)/\Im \Br(k)$ 
is finite by \cite{CTS21}*{Corollary 4.4.4},
\cref{thm: main theorem of Wittenberg conj} implies the density of $X(k)$, i.e.,
\textup{\cref{conj: Wittenberg}} holds.
\end{proof}

\small
Yisheng TIAN

Institute for Advanced Study in Mathematics

Harbin Institute of Technology

Harbin 150001, China

Email: tysmath@mail.ustc.edu.cn

\end{document}